\newcommand{\Z}{\mathbb Z}
\newcommand{\Q}{\mathbb Q}
\newcommand{\R}{\mathbb R}
\newcommand{\C}{\mathbb C}
\newcommand{\I}{\mathcal{I}}
\newcommand{\J}{\mathcal{J}}
\newcommand{\K}{\mathcal{K}}
\newcommand{\RR}{\mathcal{R}}
\newtheorem*{conj}{Conjecture}
\newtheorem{theorem}{Theorem}
\newtheorem{definition}[theorem]{Definition}
\newtheorem{lemma}[theorem]{Lemma}
\newtheorem{prop}[theorem]{Proposition}
\theoremstyle{remark}
\newtheorem{remark}[theorem]{Remark}
\DeclareMathOperator*{\Light}{Light}
\let\phi=\varphi
\let\e=\varepsilon
\begin{document}

\setcounter{page}{1}

\title{A Counterexample to Guillemin's Zollfrei Conjecture}

\author[Stefan Suhr]{Stefan Suhr}
\address{Fachbereich Mathematik, Universit\"at Hamburg}
\email{stefan.suhr@math.uni-hamburg.de}

\date{\today}

\begin{abstract}
We construct Zollfrei Lorentzian metrics on every nontrivial orientable circle bundle over a orientable closed surface. Further we prove a weaker version of 
Guillemin's conjecture assuming global hyperbolicity of the universal cover. 
\end{abstract}
\maketitle

\section{Introduction}\label{last}

In\footnote{53C22, 53C50, closed geodesics, Lorentzian manifolds} \cite{guill1} a pseudo-Riemannian metric $g$ on a compact manifold $M$ is called {\it Zollfrei} if the geodesic flow of $(M,g)$ induces the structure of a fibration 
by circles on the set of lightlike nonzero vectors $\{g(v,v)=0,v\neq 0\}\subseteq TM$. In particular every lightlike geodesic of a Zollfrei metric is closed. We will
call manifolds that admit a Zollfrei metric {\it Zollfrei} as well. 

It is not difficult to classify all Zollfrei surfaces. \cite{guill1} shows that every Zollfrei surface $(M,g)$ admits a finite cover $(M',g')$, which is globally conformal to 
$(\R^2/\Z^2, \underline{dxdy})$ where $x$ and $y$ are the canonical coordinates on $\R^2$ and $\underline{dxdy}$ denotes the metric induced by $dxdy:=
\frac{1}{2}(dx\otimes dy+dy \otimes dx)$. However already for $3$-manifolds the questions of determining the diffeomorphism type of a manifold admitting a 
Zollfrei metric is wide open. Besides the examples described in \cite{guill1}, called {\it standard examples}, none were 
known so  far. The manifolds of the standard examples all have one of the four diffeomorphism types of compact manifolds with universal cover $S^2\times\R$ 
(see \cite{tollefson}), i.e. $S^2\times S^1$ and the three manifolds with double cover $S^2\times S^1$ corresponding to the three involutions of $S^2\times S^1$:
\begin{itemize}
\item[(i)] $(x,y,z,t)\mapsto (-x,-y,-z,t)$
\item[(ii)] $(x,y,z,t)\mapsto (-x,-y,-z,t+\pi)$ 
\item[(iii)] $(x,y,z,t)\mapsto (-x,-y,z,t+\pi)$ .
\end{itemize}
Here the $2$-sphere is considered as the submanifold $\{x^2+y^2+z^2=1\}$ of $\R^3$. The standard examples all lift to a metric $g_{can}-\lambda dt^2$ on 
$S^2\times \R$, where $g_{can}$ is the canonical round sphere metric of radius one and $\lambda >0$. 

In 1989 V. Guillemin conjectured in his book \cite{guill1}, page 8:
\begin{conj}[\cite{guill1}]
Every Zollfrei manifold in dimension three has the same diffeomorphism type as one of the standard examples.
\end{conj}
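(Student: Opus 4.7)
The plan is to reduce the conjecture to the Tollefson-type classification cited just above: it would suffice to show that the universal cover $\widetilde M$ of any Zollfrei Lorentzian 3-manifold is diffeomorphic to $S^2\times\R$, since the four listed diffeomorphism types then exhaust all closed quotients. I would approach this reduction through the space of unparameterized oriented null geodesics. In dimension three the nonzero null vectors form a $5$-manifold $N\subset TM$; the Zollfrei hypothesis makes the geodesic flow fiber $N$ by circles, and quotienting further by the radial $\R_{>0}$-action yields a smooth $3$-manifold $X$. For the standard examples one computes $X\cong S^2\times S^1$, and $X$ carries a natural contact structure inherited from $PTM$, so the first step would be a rigidity theorem forcing $X$ (with its contact structure) to match this model. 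This is the three-dimensional counterpart of Guillemin's surface argument recalled in the introduction, in which the corresponding quotient is forced to be a flat torus up to finite cover.

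Assuming $X$ has been recovered, the next task is to go from $X$ back to $M$. Each closed null geodesic unfolds to a complete null line in $\widetilde M$, so the past and future light cones at a fixed basepoint $p\in\widetilde M$ sweep out two transverse $S^1$-worth of null congruences. Organising these and propagating along the retrograde cone from $p$, one would try to produce a smooth proper time function $\tau\colon\widetilde M\to\R$ whose level sets are diffeomorphic to the $S^2$-factor detected in $X$; this produces the diffeomorphism $\widetilde M\cong S^2\times\R$ that feeds into Tollefson's list and identifies $M$ with one of the four standard quotients.

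The serious obstacle is precisely this second stage. The Zollfrei condition controls the geodesic flow on the light cone but says nothing directly about the global causal structure of $\widetilde M$: without an additional hypothesis the retrograde cones need not be Cauchy, separating, or even complete, and there is no reason $\widetilde M$ should split as a product at infinity. This is exactly the gap that the paper's abstract patches by adding global hyperbolicity of $\widetilde M$ in its weaker theorem. I would therefore expect the conjecture in its full strength to be vulnerable here, and the natural place to hunt for counterexamples is among closed $3$-manifolds whose universal cover is not $S^2\times\R$, most notably the nontrivial orientable circle bundles over closed orientable surfaces (with universal covers $S^3$ or $\R^3$)—which is precisely where, according to the abstract, the paper finds them.
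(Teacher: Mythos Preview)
Your analysis is correct, but note that the paper does not \emph{prove} this conjecture --- it disproves it. Theorem~\ref{T1} constructs Zollfrei metrics on every nontrivial orientable circle bundle over a closed orientable surface, exactly the class you single out in your last sentence as the natural hunting ground for counterexamples. So your diagnosis --- that the gap lies in passing from the null-geodesic structure to a product decomposition of $\widetilde M$, and that circle bundles with universal cover $S^3$ or $\R^3$ should exploit this gap --- matches the paper's finding.

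For the positive part (Theorem~\ref{T1a}, the weaker result under global hyperbolicity of $\widetilde M$), your outline differs from the paper's argument. You propose first to pin down the contact $3$-manifold $X$ of null geodesics and then to build a time function on $\widetilde M$ from the null congruences through a point. The paper bypasses $X$ entirely and instead invokes Low's theorem on refocussing: the Zollfrei property forces all null geodesics through any point $p$ to return to $p$ in bounded arclength, so they share a homotopy class $\eta\in\pi_1(M)$, and the lift to $\widetilde M$ is refocussing at any preimage of $p$; Low's theorem (Theorem~\ref{TL1}) then forces the Cauchy hypersurface of $\widetilde M$ to be compact, hence $S^2$, and Tollefson's list finishes. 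Your route would yield more structural information about $X$, but the rigidity statement you posit for $X$ is not supplied anywhere in the paper and would itself be a substantial step.
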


In this article we will give a counterexample to the Conjecture.
\begin{theorem}\label{T1}
Every nontrivial orientable circle bundle over a closed and orientable surface admits Zollfrei metrics.
\end{theorem}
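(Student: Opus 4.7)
The plan is to construct the Zollfrei metric via a Kaluza--Klein ansatz adapted to the bundle structure. Let $\pi\colon M\to\Sigma$ be a principal $S^1$-bundle of Euler number $n\neq 0$ over the closed orientable surface $\Sigma$ of genus $g$. I equip $\Sigma$ with a Riemannian metric $g_\Sigma$ of constant Gauss curvature $K$ (positive if $\Sigma=S^2$, zero if $\Sigma=T^2$, negative if $g\geq 2$) and pick a principal connection one-form $\alpha$ whose curvature is proportional to the area form, $d\alpha=f\omega_\Sigma$, with $f=2\pi n/\mathrm{area}(\Sigma)$ a nonzero constant. On $M$ I set
\[
g:=a\,\pi^*g_\Sigma-b\,\alpha^2,
\]
with two positive parameters $a,b$; the vertical generator $T$ of the $S^1$-action is a timelike Killing field and the horizontal distribution is spacelike. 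The standard Kaluza--Klein analysis shows that $c:=\alpha(\dot\gamma)$ is conserved along geodesics, so on null geodesics I rescale to $c=1$; the null condition then forces $|\dot u|_{g_\Sigma}=\sqrt{b/a}$ with $u:=\pi\circ\gamma$, and the Euler--Lagrange equations produce constant geodesic curvature $\kappa=f\sqrt{b/a}$ for $u$.

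On $S^2$ and $T^2$ every such $u$ is automatically a closed circle; on $\Sigma_g$ with $g\geq 2$ it closes as a circle as soon as $\kappa^2>|K|$, which I enforce by taking $b/a$ large enough. Because the construction pulls back from a homogeneous structure on the universal cover of $\Sigma$, all these projected circles share a common period $T_0$ and enclose a common area $A_0$ (measured in the universal cover). Writing $\dot\gamma=\widetilde{\dot u}+cT$ as horizontal lift plus vertical drift and applying Stokes' theorem, the fiber coordinate advances after one revolution of the base by the universal amount
\[
\Delta\theta=T_0-fA_0,
\]
given explicitly by the standard spherical, Euclidean, or hyperbolic circle formulas. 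A short calculation shows that $\Delta\theta/(2\pi)$, viewed as a function of the free ratio $b/a$, sweeps a nonempty open interval of positive reals (unbounded on $S^2$ and $T^2$, bounded by $n/(2(g-1))$ in the hyperbolic case).

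The Zollfrei condition then reduces to the single rationality requirement $\Delta\theta/(2\pi)\in\mathbb{Q}$: writing $\Delta\theta/(2\pi)=p/q$ in lowest terms, every null geodesic closes up in $M$ after exactly $q$ loops of the base with common period $qT_0$, and the resulting periodic flow yields a smooth free circle action on the bundle of nonzero null vectors---the Zollfrei fibration. Density of $\mathbb{Q}$ in the accessible interval completes the argument for every $n\neq 0$ and every genus $g\geq 0$. The main obstacle is the \emph{uniformity} implicit in the Zollfrei notion: one needs all null geodesics to close with a common period, not merely pointwise, in order to get a smooth fibration. Homogeneity of the ansatz delivers exactly this, reducing the construction to a single rationality condition; the remaining technical point---producing closed magnetic circles in the higher-genus case---is handled by the freedom to take $b/a$ large.
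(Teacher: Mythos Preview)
Your proposal is correct and follows essentially the same route as the paper: the metric $a\,\pi^*g_\Sigma-b\,\alpha^2$ is exactly the paper's $h_\phi=\pi^*g-\cot^2\phi\,\alpha\otimes\alpha$ (up to reparametrisation and a global conformal factor), your Kaluza--Klein reduction to constant-curvature magnetic circles on the base is the paper's Fermat-principle/charged-particle correspondence, and your rationality condition on $\Delta\theta/(2\pi)$ is the paper's condition $cp_\phi(\gamma)\in 2\pi\,\mathbb{Q}$ from Remark~\ref{R2} and Proposition~\ref{P2.1}. The only place the paper is more explicit is in verifying the fibration structure on the null cone (Proposition~\ref{P2.2}), which it does by first producing the circle fibration on $TB^\times$ via the transitive isometry action on $T^1\widetilde{B}$ and then lifting; your appeal to a free $S^1$-action coming from a common minimal period is the same argument compressed.
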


Note that the Gysin sequence for the integral cohomology implies that the different circle-bundles over different surfaces are nonhomeomorphic.

It is well known that the lightlike geodesics of a pseudo-Riemannian manifold are invariant, as unparameterized curves, under global conformal changes of the 
metric. Therefore Theorem \ref{T1} in fact yields an infinite-dimensional family of Zollfrei metrics.

\section{Proof}\label{last4}

\subsection{The Theorem of Boothby and Wang \cite{boowa}}
Following \cite{geiges}, section 7.2., we call a vector field {\it regular} if around every point there exists a flow box that every flow line 
intersects at most once. We call a contact form $\alpha$ on an odd dimensional manifold {\it regular} if the Reeb vector field $\RR$ of $\alpha$ 
is regular. 

Let $\pi\colon M\to B$ be a principal circle bundle and $\RR\in\Gamma(TM)$ tangent to the fibres with $2\pi$-periodic flow, i.e. $\RR$ generates the  
circle action on $M$. Then, following \cite{geiges}, a differential $1$-form $\alpha\in \Omega^1(M)$ is a {\it connection $1$-form} if 
(1) $\mathcal{L}_\RR (\alpha)\equiv 0$ and
(2) $\alpha(\RR)=1$. Note that here we explicitly assume the identification $i\cdot\R\cong \R$. Conditions (1) and (2) imply that there exists a well defined closed
$2$-form $\omega$, called the {\it curvature form}, defined by $\pi^*\omega =d\alpha$. The class $-\left[\frac{\omega}{2\pi}\right]\in H^2(B,\Z)$ 
is called the {\it Euler class} of the bundle $\pi\colon M\to B$.

\begin{theorem}[\cite{geiges}, Theorem 7.2.4]
Let $(B,\omega)$ be a closed symplectic manifold with integral symplectic form $\omega/2\pi$. Let $\pi\colon M \rightarrow B$ be the 
principal $S^1$-bundle with Euler class $-\left[\frac{\omega}{2\pi}\right] \in H^2(B,\Z)$. Then there is a connection $1$-form $\alpha$ on $M$ with the following 
properties:
\begin{itemize}
\item $\alpha$ is a regular contact form,
\item the curvature form of $\alpha$ is $\omega$,
\item the vector field $\RR$ defining the principal circle action on $M$ coincides with the Reeb vector field of $\alpha$.
\end{itemize}
\end{theorem}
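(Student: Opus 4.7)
My plan is to first produce the bundle $\pi\colon M\to B$ from the topological data, and then to construct $\alpha$ by correcting an arbitrary initial connection until its curvature is exactly $\pi^*\omega$. The key cohomological input is that, with the sign convention in the preamble, the de Rham class $[\omega_0]$ of the curvature of \emph{any} connection 1-form equals $-2\pi$ times the real image of the Euler class; hence once an initial connection is fixed, the target $\omega$ differs from its curvature by an exact 2-form on $B$, and this difference can be absorbed into the connection by adding a pullback 1-form from $B$.

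For existence of the bundle I would invoke the classification of principal $S^1$-bundles over $B$ by $H^2(B,\Z)$ (via pullback of the universal bundle $ES^1\to BS^1=\mathbb{CP}^\infty$, equivalently via the first Chern class of the associated complex line bundle): since $-[\omega/2\pi]\in H^2(B,\Z)$ by assumption, this produces a principal $S^1$-bundle $\pi\colon M\to B$ with Euler class $-[\omega/2\pi]$, and I denote by $\RR$ the infinitesimal generator of the $2\pi$-periodic action. Next I pick any connection 1-form $\alpha_0\in\Omega^1(M)$ (these exist by patching local connection 1-forms with a partition of unity on a trivializing cover); its curvature $\omega_0$, defined by $\pi^*\omega_0=d\alpha_0$, satisfies $[\omega_0]=[\omega]$ in $H^2_{dR}(B,\R)$, so $\omega-\omega_0=d\beta$ for some $\beta\in\Omega^1(B)$. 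I then set
\[
\alpha := \alpha_0 + \pi^*\beta.
\]
Because $\RR$ is vertical, $\iota_\RR\pi^*\beta=0$ and $\mathcal{L}_\RR\pi^*\beta=0$, so $\alpha$ remains a connection 1-form, and by construction $d\alpha=\pi^*\omega$.

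With $\dim B=2n$, the form $\alpha\wedge(d\alpha)^n=\alpha\wedge\pi^*(\omega^n)$ is nowhere vanishing on $M$, since $\omega^n$ is a volume form on $B$ and $\alpha(\RR)=1$; hence $\alpha$ is contact, and its curvature is $\omega$ by construction. For the Reeb identification, $\alpha(\RR)=1$ together with $\iota_\RR d\alpha=\iota_\RR\pi^*\omega=0$ (by verticality of $\RR$) exhibits $\RR$ as the Reeb vector field of $\alpha$. Regularity of $\RR$ follows at once from the existence of local trivializations $\pi^{-1}(U)\cong U\times S^1$: any slab $U\times I$ with $I\subsetneq S^1$ a proper arc is a flow box met at most once by each orbit.

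I expect the main obstacle to be the cohomological bridge between topology and differential forms that identifies $[\omega_0]$ with $-2\pi$ times the real image of the Euler class; once this identification is in hand the entire construction reduces to the correction trick above together with the routine verification in the last paragraph.
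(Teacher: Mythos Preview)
The paper does not give its own proof of this theorem: it is quoted verbatim from \cite{geiges}, Theorem~7.2.4, and used as a black box. Your proposal is a correct sketch of the standard Boothby--Wang argument (and in fact matches the proof in \cite{geiges}): pick any connection $\alpha_0$, use the Chern--Weil identification of $[\omega_0]$ with $-2\pi$ times the real Euler class to conclude $[\omega_0]=[\omega]$, correct by $\pi^*\beta$ with $d\beta=\omega-\omega_0$, and then verify the contact, Reeb and regularity conditions directly. All of the verification steps you list are sound; the only point you flag as delicate---the cohomological identification---is precisely the content of Chern--Weil theory for $S^1$-bundles, and the sign is consistent with the convention set up in the preamble of the paper.
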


Let $(B,g)$ be a smooth closed surface with constant curvature $K$ and volume form $\text{dvol}^g$ such that $\text{vol}(B,g)\in 2\pi \Z$.
Then $\frac{\text{dvol}^g}{2\pi}$ is an integral symplectic form. Denote with $\pi\colon M\to B$ the $S^1$-principle bundle over $B$ with Euler class 
$-\left[\frac{\text{dvol}^g}{2\pi}\right]\in H^2(B,\Z)$. Define for $\phi \in (0,\frac{\pi}{2})$ the Lorentzian metric 
$$h_\phi :=\pi^* g-\cot^2\phi\cdot \alpha\otimes\alpha$$
on $M$. Note that $\pi-2\phi$ is the opening angle of the light cones of $h_\phi$ around $\RR$. 
The Reeb vector field $\RR$ of $\alpha$ is a timelike Killing vector field of $h_\phi$, i.e. $(M,h_\phi)$ is stationary spacetime. 
Denote with $\Phi$ the flow of $\RR$. 

Note that in the case of $B\cong S^2$ and $K=4$ we can describe $h_\phi$ as the pseudo-Riemannian analogues to the Berger spheres \cite{berger}. 
Consider the canonical embedding $i\colon S^3\hookrightarrow \mathbb{H}\cong \R^4$ into the Quaternions and the orthonormal frame field $(\mathcal{I},
\mathcal{J},\mathcal{K})$ with $\I\colon x\mapsto i\cdot x$, $\J\colon x\mapsto j\cdot x$ and $\K\colon x\mapsto k\cdot x$. By $(\I^\ast,\J^\ast,\K^\ast)$ denote the 
dual frame field. Let $\langle.,.\rangle$ be the canonical scalar product on $\R^4$. Then we know that $\I^*=\alpha$ and $\RR=\I$ for $(B,g)=(\C P^1,g_{FS})$ 
where $g_{FS}$ denotes the Fubini-Studi metric. Further it follows that  
$$h_\phi(x):=i^*\langle .,.\rangle -\frac{1}{\sin^2\phi}\I^\ast \otimes \I^\ast=\J^\ast\otimes \J^\ast+\K^\ast\otimes \K^\ast-\cot^2(\phi) \I^\ast\otimes \I^\ast.$$

\subsection{The Arrival Time}\label{S2.2}

Following \cite{cjm} we will call a Lorentzian manifold $(\mathcal{M},g)$ {\it standard stationary} iff $\mathcal{M}$ splits into a product $\mathcal{M}_0\times \R$ 
with
\begin{equation}\label{NE1}
g(x,t)[(v,\tau),(v,\tau)]=g_0(x)[v,v]+2g_0(x)[\delta(x),v]\tau-\beta(x)\tau^2
\end{equation}
where $(x,t)\in \mathcal{M}_0\times \R$, $(v,\tau)\in T_x \mathcal{M}_0\times \R$, $g_0$ is a Riemannian metric on $\mathcal{M}_0$, $\delta$ a smooth vector 
field on $\mathcal{M}_0$ and $\beta$ a positive smooth function on $\mathcal{M}_0$.

Denote with $F$ the Finsler metric on $\mathcal{M}_0$ given by 
$$F(x,v):=\sqrt{\tilde{g}_0(x)[v,v]+\tilde{g}_0(x)[\delta(x),v]^2}+\tilde{g}_0(x)[\delta(x),v]$$
where $\tilde{g}_0:=g_0/\beta$. We have the following {\it Fermat's principle}:

\begin{theorem}[\cite{cjm} Theorem 4.1]\label{T2}
Let $(\mathcal{M},g)$ be a standard stationary spacetime and $(x_0,t_0)\in \mathcal{M}$, $s\in\R\mapsto \gamma(s)=(x_1,s)\in \mathcal{M}$, 
$x_1\in \mathcal{M}_0$.
A curve $s\in [0,1]\to z(s)=(x(s),t(s))\in\mathcal{M}$ is a future pointing lightlike geodesic of $(\mathcal{M},g/\beta)$ if and only if $x(s)$ is a geodesic for the 
Fermat metric $F$, parameterized to have constant Riemannian speed $h(x)[\dot{x},\dot{x}]=\widetilde{g}_0(x)[\delta(x),\dot{x}]^2+\widetilde{g}_0[\dot{x},\dot{x}]$,
and 
\begin{equation}\label{NE2}
t(s)=\int_0^s \left(\widetilde{g}_0(x)[\delta(x),\dot{x}]+\sqrt{\widetilde{g}_0(x)[\delta(x),\dot{x}]^2+\widetilde{g}_0[\dot{x},\dot{x}]}\right)d\nu.
\end{equation}
\end{theorem}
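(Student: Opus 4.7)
The plan is to first convert the null condition into the explicit formula for $\dot t$, and then use the time-translation symmetry of the stationary metric to reduce the lightlike geodesic equation on $\mathcal{M}$ to the Finsler geodesic equation for $F$ on $\mathcal{M}_0$.

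First, dividing \eqref{NE1} by $\beta$ gives
\[
(g/\beta)[(v,\tau),(v,\tau)] = \widetilde{g}_0[v,v] + 2\widetilde{g}_0[\delta,v]\tau - \tau^2,
\]
so the lightlike equation for a curve $z(s) = (x(s),t(s))$ is a scalar quadratic in $\dot t$. Its future-pointing (positive) root is precisely $\dot t = F(x,\dot x)$, whose integral is \eqref{NE2}. This puts future-pointing null curves in bijection with pairs $(x(s),t_0)$ and identifies the arrival time $t(1)-t_0$ with the Finsler length $L_F(x):=\int_0^1 F(x,\dot x)\,d\nu$ of the projection.

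Next, since $\partial_t$ is a Killing field of $g/\beta$, the function $E := (g/\beta)(\dot z,\partial_t) = \widetilde{g}_0[\delta,\dot x] - \dot t$ is constant along every geodesic. Combined with $\dot t = F(x,\dot x)$ this forces both $\dot t$ and $F(x,\dot x)$ to be constant along any null geodesic. Since by construction $F(x,\dot x) - \widetilde{g}_0[\delta,\dot x] = \sqrt{h(x)[\dot x,\dot x]}$, the constancy of $F$ together with that of $E$ (equivalently, of $\widetilde{g}_0[\delta,\dot x]$) delivers the constant Riemannian speed condition stated in the theorem.

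The remaining content is the equivalence of the spatial projection being an $F$-geodesic with $z$ being a lightlike geodesic of $g/\beta$. I would argue this variationally: a null curve is a geodesic of $g/\beta$ iff it is critical, modulo affine reparametrisation, for the Lorentzian action $\int (g/\beta)(\dot z,\dot z)\,d\nu$ among null curves with endpoints on the same orbits of $\partial_t$. Fixing the gauge $\dot t = F(x,\dot x)$ determines $t$ from $x$ through \eqref{NE2}, and the constrained action reduces to a constant multiple of $L_F(x)$. Critical points on the spacetime side then project bijectively to critical points of $L_F$, i.e.\ geodesics of $F$; conversely, lifting such a Finsler geodesic parametrised with constant $h$-speed through \eqref{NE2} produces a null curve with $E$ and $\dot t$ constant, and one checks directly that it satisfies the Euler--Lagrange equations of $g/\beta$.

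The main obstacle, and the step I would spend the most care on, is making the last reduction rigorous: one must verify that every infinitesimal spatial variation $\delta x$ of a null curve can be compensated by a variation $\delta t$ so that the perturbed lift remains lightlike and future-pointing, without introducing spurious critical points. This amounts to the non-degeneracy of the Fermat metric $F$, which in turn follows from the everywhere-timelike character of $\partial_t$ for $g/\beta$ (and hence the positivity of the radicand in the definition of $F$ for every $\dot x \ne 0$).
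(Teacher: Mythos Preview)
The paper does not prove this theorem at all: it is quoted verbatim as Theorem~4.1 of \cite{cjm} and used as a black box, so there is no in-paper argument to compare your proposal against. Your sketch is essentially the standard route taken in the cited reference (solve the null quadratic for $\dot t$, use the conserved quantity coming from the Killing field $\partial_t$, and reduce the geodesic variational problem to the Fermat--Finsler length functional on $\mathcal{M}_0$), so in spirit you are reproducing the original proof rather than offering an alternative.

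One small logical slip worth fixing: from the conservation of $E=\widetilde{g}_0[\delta,\dot x]-\dot t$ together with the null relation $\dot t=F(x,\dot x)$ you \emph{cannot} conclude that $\dot t$ and $F(x,\dot x)$ are separately constant; all you get is that their difference $E$ is constant. That is already enough, since $-E=F(x,\dot x)-\widetilde{g}_0[\delta,\dot x]=\sqrt{h(x)[\dot x,\dot x]}$ gives the constant $h$-speed directly. Your subsequent sentence actually uses this correct computation, so just delete the intermediate claim that $\dot t$ and $F$ are individually constant. The variational reduction in your last two paragraphs is the right idea but, as you acknowledge, needs the usual care about admissible variations; this is exactly what \cite{cjm} carries out in detail.
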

Recall that the lightlike geodesics of $(\mathcal{M}_0\times \R,g)$ are reparameterizations of the lightlike geodesics of $(\mathcal{M}_0\times \R,\tilde{g})$.

\subsection{The Global Construction}
We want to apply the reasoning of Theorem \ref{T2} to the lightlike geodesics of $(M,h_\phi)$. It is clear that these Lorentzian manifold are not standard 
stationary. But we can consider the problem locally. Choose a $h_\phi$-spacelike immersion $I\colon D^2\to M$, i.e. $I^* h_\phi >0$. Then the map 
$\Phi_I\colon D^2\times \R\to M$, $(p,t)\mapsto \Phi(I(p),t)$ is an immerison of $D^2\times \R$. It follows that $(D^2\times \R,(\Phi_I)^\ast h_\phi)$ is a 
standard stationary Lorentzian spacetime in the sense of \cite{cjm}. 
\begin{remark}\label{R2}
Consider the arrival time $t_x$ for a closed curve $x\colon[a,b]\to D^2$ as defined by \eqref{NE2}. Then $[t_x(b)-t_x(a)] \text{mod}\;2\pi$
quantifies the obstruction of the lightlike curve $s\mapsto \Phi_I(x(s),t_x(s))$ to being closed in $M$. This follows from the fact that the map $\Phi_I$ is 
$2\pi$-periodic in the $\R$-factor. So if the arrival time is a rational multiple of $2\pi$, e.g. $t_x(b)-t_x(a)=\frac{p}{q}2\pi$, then 
$s\mapsto \Phi(x^q(s),t_{x^q}(s))$ is closed.
\end{remark}

We will now give a {\it coordinate invariant} description of the arrival time, i.e. a functional for curves in the orbit space $B$. 
Let $I\colon D^2\to M$ be a spacelike immersion as before. We express the components of \eqref{NE1} in terms of objects defined on $B$. 
We immediately see that $\widetilde{g}_0= \frac{1}{-h_\phi(\RR,\RR)}I^* h_\phi$. Next define $\underline{Y}:=Y-\frac{h_\phi (Y,\RR)}{h_\phi (\RR,\RR)} \RR$ for 
$Y\in TM$. Then on one side we have $h_\phi(\underline{Y},\underline{Y})=g(\pi_\ast Y,\pi_\ast Y)$. One the other side we have 
$$h_\phi(\underline{Y},\underline{Y})=-h_\phi(\RR,\RR)\left(\frac{h_\phi(Y,Y)}{-h_\phi(\RR,\RR)}+\frac{h_\phi(Y,\RR)^2}{h_\phi(\RR,\RR)^2}\right).$$
From \eqref{NE1} follows that $\widetilde{g}_0(\delta(x),.)=\frac{I^* h_\phi(\RR,.)}{-h_\phi(\RR,\RR)}=-I^*\alpha$. Combining the equations we get
$$F(x,v)=\sqrt{\frac{(\pi\circ I)^*g(v,v)}{-h_\phi(\RR,\RR)}}-I^*\alpha(v)=\tan\phi\cdot\sqrt{(\pi\circ I)^*g(v,v)}-I^*\alpha(v).$$

Spacelike immersions $I\colon D^2\to M$ are induced for example by certain local section of $\pi\colon M\to B$. Therefore for $U\subseteq B$ contractible and 
a section $s\colon U\to M$ such that the image of $s$ is spacelike, we can define $F_U$ via the previous formula. Note that a global version of $F_U$ does not 
exists, as $M$ is nontrivial. In our case this comes from the fact that $\alpha$ has no well defined counterpart on $B$. But $d\alpha$ has a well defined 
counterpart on $B$ in the form of $\text{dvol}^g$. So the global version of the arrival time functional, we are looking for, is no longer well defined on curves 
$\gamma\colon I\to B$, but instead is defined for (e.g.) smooth maps $f\colon S\to B$ where $S$ is a compact oriented $2$-manifold with nonempty boundary. 
From this point of view the {\it arrival time functional} for a map $f\colon S\to B$ is
$$cp_\phi(f):=\tan\phi\cdot L^{g}(f|_{\partial S})+\int_S f^\ast (-\text{dvol}^{g})$$
where $L^{g}$ denotes the length functional of $g$. Following \cite{tai1} we will call functionals like $cp_\phi$ {\it charged particles}.

The critical points of $cp_\phi$ describe the periodic orbits of a charged particle on $(B,\tan(\phi)\cdot g)$ moving under the influence of the 
``magnetic field'' $-\text{dvol}^{g}$. 

\subsection{Extremals of $cp_\phi$}
The general framework can be described as follows: The definitions and results are taken from \cite{tai0}. Let $\omega$ be an arbitrary $2$-form on $B$. 
Consider for the class of contractible open subsets $U\subseteq B$ the family of functionals 
$$cp_U\colon C^{\infty}(I,U)\to \R,\; \gamma\mapsto cp_U(\gamma):=L^{g}(\gamma)+\int_\gamma \sigma,$$
where $\sigma \in \Omega^1(U)$ is a primitive of $\omega|_{U}$. The physical intuition behind $cp_U$ is that the critical points of $cp_U$ model the motion 
of a charged particle moving under the influence of a magnetic field $\omega$. In physics terms the $1$-form~$\sigma$ represents a vector potential of 
the magnetic field $\omega|_U$. 

\begin{remark}
Let $\gamma\colon [a,b]\to U$ be a regular curve parameterized w.r.t. constant $g$-arclength. 
Then
\begin{equation}\label{NE3}
\frac{1}{|\dot{\gamma}|_g}\nabla_{\dot{\gamma}} \dot{\gamma}=-\omega(\dot{\gamma},.)^{\#} 
\end{equation}
are the Euler-Lagrange equations of $cp_U$ (Here we set $|v|_g:=\sqrt{g(v,v)}$). Especially the Euler-Lagrange equations are independent of $U$ and $\sigma$. 
We will refer to the solutions as {\it extremals}. 
\end{remark}

Recall that there exist constant $\e,\delta >0$ such that any pair of points $x,y\in B$, with distance at most $\delta$, can be joined by a unique solution $\gamma
\colon [0,1]\to B$ of \eqref{NE3} lying completely in the ball of radius $\e$ around $x$, compare \cite{tai0}.

Next we describe the relation between the critical points of $cp_\phi$ and $cp_U$. The following definition is taken from \cite{tai0}. A {\it film} $\Pi\subseteq B$ is 
an oriented surface with nonempty boundary embedded into $B$ such that the boundary is a union of finitely many closed curves 
$\gamma_\alpha$ with the following properties: 

\begin{itemize}
\item[1)] $\gamma_\alpha$ has the boundary orientation,
\item[2)] every $\gamma_\alpha$ is a finite polygon of extremal segments whose lengths do not exceed $\delta$ and 
\item[3)] the curves $\gamma_\alpha$ are disjoint, i.e. $\gamma_\alpha \cap \gamma_\beta=\emptyset$ if $\alpha\neq \beta$.
\end{itemize}
The space of films on $B$ is denoted with $L(B)$. Define 
$$cp\,\colon\, L(B)\to \R,\; \Pi\mapsto L^g(\Pi|_{\partial S})+\int_S \Pi^\ast \omega.$$

\begin{lemma}[\cite{tai0}, Lemma 1]
Let $\Pi\in L(B)$. Then $\delta cp(\Pi)=0$, i.e. $\Pi$ is a critical point of $cp$, if and only if the boundary of $\Pi$ consists of a union of smooth closed 
extremals.
\end{lemma}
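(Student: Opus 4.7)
The plan is to compute the first variation of $cp$ along a smooth one-parameter family $\Pi_\epsilon$ of films and read off the critical-point condition from it, using \eqref{NE3} to kill the interior terms on each extremal segment of $\partial S$.

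First I would decompose an arbitrary variation into an interior piece (supported away from $\partial S$) and a boundary piece. The length functional $L^g(\Pi|_{\partial S})$ obviously depends only on the boundary. For the magnetic term, since $B$ is two-dimensional, every $2$-form $\omega$ on $B$ is automatically closed, so Stokes' theorem applied to the cylinder $H\colon S\times[0,\epsilon]\to B$, $H(x,\tau)=\Pi_\tau(x)$, gives
\[
\int_S \Pi_\epsilon^*\omega-\int_S \Pi_0^*\omega=\int_{\partial S\times[0,\epsilon]} H^*\omega,
\]
so interior variations fixing $\partial S$ do not contribute. Hence $\delta cp(\Pi)$ depends only on the variation field $V$ of the polygonal boundary.

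Next I would use the standard first-variation formula for length on each extremal segment $\sigma_\alpha^j$ of a boundary polygon $\gamma_\alpha$ (with corners $p_\alpha^j$ and one-sided unit tangents $T^\pm$):
\[
\delta L^g(\gamma_\alpha)=-\sum_j\int_{\sigma_\alpha^j}\langle\nabla_T T,V\rangle\,ds+\sum_j\langle T^-(p_\alpha^j)-T^+(p_\alpha^j),V(p_\alpha^j)\rangle.
\]
The first variation of the enclosed magnetic flux along the boundary is
\[
\delta\int_S \Pi^*\omega=\int_{\partial S}\omega(V,T)\,ds=-\sum_{\alpha,j}\int_{\sigma_\alpha^j}\langle \omega(T,\cdot)^{\#},V\rangle\,ds,
\]
using antisymmetry of $\omega$. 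Combining the two, the integrand on each $\sigma_\alpha^j$ becomes $-\langle\nabla_T T+\omega(T,\cdot)^{\#},V\rangle$, which vanishes because $\sigma_\alpha^j$ is an extremal, i.e.\ solves \eqref{NE3}. Thus
\[
\delta cp(\Pi)[V]=\sum_{\alpha,j}\langle T^-(p_\alpha^j)-T^+(p_\alpha^j),V(p_\alpha^j)\rangle.
\]

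Finally, this expression vanishes for all choices of $V$ if and only if $T^-(p_\alpha^j)=T^+(p_\alpha^j)$ at every vertex, which is exactly the condition that each $\gamma_\alpha$ is $C^1$ at the corner. But two extremal segments meeting with a common tangent must coincide on their overlap by the uniqueness of solutions to \eqref{NE3} (the statement recalled just before the definition of a film), so $\gamma_\alpha$ is then a smooth closed extremal. The main care I expect is in the Stokes/orientation bookkeeping for the magnetic term and in arguing that a general variation of $\Pi$ may indeed be split into an interior part and a boundary part without losing anything at the corners; once this is set up the extremality condition \eqref{NE3} does all the work.
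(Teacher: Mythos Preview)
The paper does not give its own proof of this lemma; it simply quotes it from Ta{\u\i}manov \cite{tai0}. Your first-variation computation is the standard argument one would expect in that reference: reduce to the boundary via Stokes (using $d\omega=0$ on a surface), apply the first variation of arclength on each segment, cancel the interior integrals by the extremal equation \eqref{NE3}, and read off the corner condition $T^{-}=T^{+}$, which together with ODE uniqueness for \eqref{NE3} upgrades $C^1$ to smooth. This is correct and is essentially the approach of \cite{tai0}, so there is nothing to compare against in the present paper.
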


As an example we calculate the extremals of $(\C P^1, \tan\phi\cdot g_{FS},\text{dvol}^{FS})$. The Fubini-Study metric is a Riemannian metric on $\C P^1\cong S^2$ of 
constant curvature equal to $4$ and diameter $\pi/2$. Hence $(\C P^1, g^{FS})$ is isometric to $(S^2,\frac{1}{4}g_{can})$ where 
$g_{can}$ denotes the canonical metric on $S^2$ with curvature equal to one. Fix the ``polar'' parameterization 
$$P\colon U:=(0,\pi)\times (0,2\pi)\to S^2, \;(\theta,\psi)\mapsto (\sin\theta \cos\psi, \sin\theta \sin\psi, \cos\theta).$$ 

We claim that one solution to the Euler-Lagrange equations of $cp_U$ is the curve $t\mapsto (\sin\theta \cos t, \sin\theta \sin t, \cos\theta)$ with $\theta
=\arctan(2\tan\phi)$. Note that this curve is simply closed. Observe that $P^\ast g^{FS}=\frac{1}{4}(d\theta^2+\sin^2\theta d\psi^2)$ and $P^\ast\text{dvol}^{FS}=
\frac{\sin\theta}{4}d\theta\wedge d\psi$. Further observe that $\Gamma^\psi_{\psi \psi}=0$ and $\Gamma^\theta_{\psi \psi}=-\sin(\theta)\cos(\theta)$. Consequently 
we have
$$
\frac{\tan\phi}{|\dot{\gamma}|}\nabla_{\partial_\psi}\partial_\psi
=-2\cos(\theta) \tan(\phi) \partial_\theta$$
and 
$$-\omega(\dot{\gamma},.)^\# =\text{dvol}^{FS}(\partial_\psi,.)^\#=-\sin(\theta) \partial_\theta.$$

Then the curve $t\mapsto (\sin\theta \cos t, \sin\theta \sin t, \cos\theta)$ is a solution of the Euler Lagrange equation iff $-2\tan\phi \cos\theta=
-\sin\theta$ or equivalently $\theta =\arctan (2\tan\phi)$. 


\begin{remark}\label{R3}
Recall that if $(B,g)$ has constant curvature, the isometry group of the universal cover $(\widetilde{B},\widetilde{g})$ acts transitively on $T^1\widetilde{B}$. 
Further it preserves $\widetilde{cp}$, the lift of $cp$, if $\omega$ is a multiple of $\text{dvol}^g$. This implies that the extremals of $\widetilde{cp}$ are either all 
closed or all nonclosed. In the case of nonnegative curvature all extremals of $\widetilde{cp}$, and therefore $cp$, are closed. For curvature equal to $-1$ the 
extremals are closed iff $|\omega(\dot{\gamma},.)^\sharp|_g>1$. Note that it is sufficient to consider the case $K=-1$. 
\end{remark}

\subsection{Lightlike Geodesics of $(M,h_\phi)$}

Since $cp_\phi$ is constant on all simply closed extremals, Remark \ref{R2} readily implies:

\begin{prop}\label{P2.1}
The lightlike geodesics of $(M,h_\phi)$ are either all closed or all nonclosed depending on whether the extremals of $cp_\phi$ are closed
and $cp_\phi(\gamma)\in \Q$ for any extremal $\gamma$ of $cp_\phi$. 
\end{prop}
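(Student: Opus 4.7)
The plan is to transfer the closedness question on $(M,h_\phi)$ down to $B$ via Fermat's principle and then exploit the rigidity coming from constant curvature. Although $(M,h_\phi)$ is not globally standard stationary, it is so on the image of any map $\Phi_I\colon D^2\times \R\to M$ built from a spacelike immersion $I\colon D^2\to M$. Theorem \ref{T2} therefore identifies lightlike geodesics inside such a chart with extremals of the local charged-particle functional $cp_U$, and conversely every extremal of $cp_U$ lifts to a lightlike $h_\phi$-geodesic by integrating \eqref{NE2}. The Euler-Lagrange equations \eqref{NE3} are chart independent, so globally a curve in $B$ carries a lightlike $h_\phi$-geodesic above it if and only if it is an extremal of $cp_\phi$.

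First I would invoke Remark \ref{R3}: since $(B,g)$ has constant curvature and $\omega=-\text{dvol}^g$ is a multiple of the area form, the extremals are either all closed or all nonclosed. If they are all nonclosed then their lifts are already nonclosed in $M$, because $\pi\circ\tilde\gamma$ failing to close prevents $\tilde\gamma$ itself from closing; this gives the negative half of the dichotomy.

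Assume now all extremals are simply closed. For such an extremal $\gamma$ choose a filling $f\colon S\to B$ with $f|_{\partial S}=\gamma$, use a spacelike lift of $f$ as the chart $I$, and apply Remark \ref{R2}: the lightlike lift $\tilde\gamma$ of $\gamma$ closes in $M$ after $q$ traversals precisely when the arrival time around $\gamma$ is a rational multiple of $2\pi$, i.e.\ $cp_\phi(f)\in 2\pi\Q$. Two fillings of $\gamma$ differ by a closed $2$-cycle in $B$, which is an integer multiple of the fundamental class, so their $cp_\phi$-values differ by an integer multiple of $-\text{vol}(B,g)\in 2\pi\Z$. Hence $cp_\phi(\gamma)\bmod 2\pi$ is a well defined invariant of $\gamma$, and by the transitive action of the isometry group of $\widetilde B$ used in Remark \ref{R3} it takes the same value on every simply closed extremal. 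Therefore rationality either fails for all extremals, forcing every $\tilde\gamma$ to be nonclosed, or holds for all of them, forcing every $\tilde\gamma$ to close after a common number of traversals.

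The main obstacle I anticipate is the bookkeeping step: matching the local Finsler arrival time \eqref{NE2} with the coordinate free $cp_\phi$, and verifying that the residual ambiguity really sits in $2\pi\Z$ rather than in a finer sublattice. This is precisely the point at which the integrality $\text{vol}(B,g)\in 2\pi\Z$ coming from the Boothby-Wang normalization becomes essential.
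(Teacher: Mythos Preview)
Your proposal is correct and follows the same route as the paper, which in fact gives only a one-line justification: ``Since $cp_\phi$ is constant on all simply closed extremals, Remark~\ref{R2} readily implies [the proposition].'' You have simply unpacked this, making explicit the role of Remark~\ref{R3} for the all-closed/all-nonclosed dichotomy, the transitive isometry action forcing $cp_\phi$ to be constant on extremals, and the Boothby--Wang integrality $\text{vol}(B,g)\in 2\pi\Z$ handling the filling ambiguity---all of which the paper leaves implicit.
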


Again as an example we determine the condition for $(\C P^1,\tan\phi \cdot g_{FS},\text{dvol}^{FS})$.
In the polar parameterization $P$ we can choose the primitive $\sigma=-\frac{\cos\theta}{4}d\psi$ of $P^\ast(-$dvol$^{FS})$. Thus we have 
$\int_\gamma \sigma =-\frac{\pi}{2} \cos(\arctan (2\tan\phi))$. The length of every simply closed solution of \eqref{NE3} is 
$\pi \sin(\arctan (2\tan\phi))$. Bringing everything together we get
\begin{align*}
cp_U(\gamma)&=\pi\left(\sin(\arctan (2\tan\phi))-\frac{\cos(\arctan (2\tan\phi))}{2}\right)\\
&=\frac{\pi}{2} \cos(\arctan (2\tan\phi))(4\tan(\phi)-1)\\
&=\frac{\pi}{2}\frac{1}{\sqrt{1+4\tan^2\phi}}(4\tan(\phi)-1)
\end{align*}
From Remark \ref{R2} we know that $cp_U(\gamma)$ mod $2\pi$ quantifies the obstruction for the lightlike geodesics over $\gamma$ to being closed. Therefore 
the lightlike geodesics of $(S^3,h_\phi)$ will eventually close iff $\frac{4\tan(\phi)-1}{\sqrt{1+4\tan^2\phi}}\in \Q$.

\begin{prop}\label{P2.2}
If the lightlike geodesics of $(M,h_\phi)$ are closed then $(M,h_\phi)$ is Zollfrei.
\end{prop}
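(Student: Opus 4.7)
Being Zollfrei requires the geodesic flow on the lightlike cone $\mathcal{L}:=\{v\in TM\setminus\{0\}: h_\phi(v,v)=0\}$ to define a smooth locally trivial fibration by circles. The hypothesis already gives that every orbit is a topological circle, so the plan is to exhibit a free smooth $S^1$-action on $\mathcal{L}$ whose orbits are these circles; the quotient will then automatically be a principal $S^1$-bundle.

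The first step is to upgrade ``every orbit closed'' to ``common period''. By Proposition~\ref{P2.1}, the hypothesis forces every extremal $\gamma$ of $cp_\phi$ on $B$ to be closed with $cp_\phi(\gamma)\in 2\pi\Q$. By Remark~\ref{R3}, the transitive action of $\text{Iso}(\widetilde{B},\widetilde{g})$ on $T^1\widetilde{B}$ preserves the lift of $cp_\phi$, so all extremals on $B$ share a common length $L$ and a common value $cp_\phi(\gamma)=2\pi p/q$ in lowest terms. Remark~\ref{R2} then guarantees that every lightlike geodesic of $(M,h_\phi)$ closes exactly after its base projection has completed $q$ laps, contributing an accumulated Reeb shift of $2\pi p$.

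The second step is to convert this into a smooth group action. Since $\RR$ is a timelike Killing field, $v\mapsto h_\phi(v,\RR)$ is a first integral of the geodesic flow on $\mathcal{L}$, so I normalize by $h_\phi(\dot\gamma,\RR)=1$, carving out a compact subbundle $\mathcal{L}^1\subset\mathcal{L}$ invariant under the flow. By Theorem~\ref{T2} and the formulas relating $h_\phi$ to the Fermat data, the Finsler speed of $\pi\circ\gamma$ on $B$ is determined by this normalization, so all orbits on $\mathcal{L}^1$ project to base extremals traversed at a common speed $c>0$. Combined with the common length $L$ and common lap number $q$ from the first step, every orbit on $\mathcal{L}^1$ has a single affine period $T=qL/c$. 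This yields a smooth $S^1=\R/T\Z$-action on $\mathcal{L}^1$, free for $T$ minimal; the diffeomorphism $\mathcal{L}^1\times\R^+\xrightarrow{\sim}\mathcal{L}$, $(v,\lambda)\mapsto \lambda v$, extends it smoothly to a free $S^1$-action on all of $\mathcal{L}$. The projection $\mathcal{L}\to\mathcal{L}/S^1$ is then the required principal $S^1$-bundle, and $(M,h_\phi)$ is Zollfrei.

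I expect the main technical point to be verifying that a common Finsler speed on $B$ does produce a genuinely uniform affine period on $\mathcal{L}^1$: this rests on the conserved-quantity interpretation of $h_\phi(\cdot,\RR)$ together with the explicit Fermat correspondence of Theorem~\ref{T2}, both of which are in hand. Given these, the uniform period $T=qL/c$ is immediate and the remainder is the standard promotion of a free smooth $S^1$-action on a smooth manifold to a principal circle bundle.
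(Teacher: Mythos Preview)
Your plan is correct and rests on the same ingredients as the paper: the transitive isometry action on $T^1\widetilde{B}$ from Remark~\ref{R3} to make all base extremals congruent, and the constancy of $cp_\phi$ together with Remark~\ref{R2} to control the lift to $M$. The organisation differs. The paper first shows that the Euler--Lagrange flow $\Phi_B$ fibres $TB^\times$ by circles, arguing abstractly via the homogeneous-space structure of each $T^r\widetilde{B}$ (the isotropy group of a flowline is closed, hence one gets a $1$-dimensional fibration, and closedness of the extremals makes the fibres compact); it then lifts by the conjugacy $\pi_*\circ\Phi_M=\Phi_B\circ\pi_*$, so that every $\Phi_M$-orbit finitely covers a $\Phi_B$-orbit with a covering degree fixed by the common value of $cp_\phi$. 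You instead work directly on $\mathcal{L}$: the Killing first integral $h_\phi(\cdot,\RR)$ cuts out a compact slice $\mathcal{L}^1$ on which the geodesic flow has a single explicit affine period $T=qL/c$, yielding a free $S^1$-action at once. Your route is more concrete and avoids the isotropy-group language; the paper's route makes the role of the homogeneous geometry on the base more visible and cleanly separates the base fibration from the lifting step. One small point to patch: the map $\mathcal{L}^1\times\R^+\to\mathcal{L}$, $(v,\lambda)\mapsto\lambda v$, only reaches the nappe with $h_\phi(v,\RR)>0$; treat the other nappe by the obvious symmetry $\RR\mapsto-\RR$, or replace $\R^+$ by $\R^\times$.
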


\begin{proof}
First recall that \eqref{NE3} defines an Euler-Lagrange flow on $TB$. The point of the proof is to note the relation between the Euler-Lagrange flow of $cp_\phi$ 
on $TB^\times:=TB\setminus\{\text{zero section}\}$ and the geodesic flow of $h_\phi$ on the smooth manifold $\Light_\phi$ of lightlike tangent vectors $v\neq 0$ 
of $(M,h_\phi)$. More precisely the Euler-Lagrange flow $\Phi_B$ of $\cot\phi\cdot cp_\phi$ on $TB^\times$ and the geodesic flow 
$\Phi_M$ on $\Light_\phi$ are conjugated via $\pi_*$, i.e. $\pi_*\circ \Phi_M=\Phi_B\circ \pi_*$, where $\pi\colon M\to B$ denotes the bundle projection.

1) $\Phi_B$ induces an circle fibration on $TB^\times$: Consider the pullback $\widetilde{cp}_\phi$ of $cp_\phi$ to the universal cover 
$\widetilde{\pi}\colon \widetilde{B}\to B$. $\widetilde{cp}_\phi$ is invariant under the action of $\text{Isom}(\widetilde{B},\widetilde{g})$, the isometry group of 
$(\widetilde{B},\widetilde{g})$. Recall that since $\widetilde{g}$ has constant curvature, $\text{Isom}(\widetilde{B},\widetilde{g})$ acts transitively on 
$T^r \widetilde{B}$ for every $r>0$ and commutes with $\Phi_{\widetilde{B}}$, the Euler-Lagrange flow of $\widetilde{cp}_\phi$. The isotropy group of every 
flowline is closed. This defines a fibre bundle structure on every $T^r \widetilde{B}$ with $1$-dimensional fibre. Since 
$\Phi_{\widetilde{B}}(v,t)=\Phi_{\widetilde{B}}(\lambda\cdot v,\frac{t}{\lambda})$, these fibrations extend to a fibration of $T\widetilde{B}^\times$. 
The assumption that the lightlike geodesics of $h_\phi$ are closed implies that the extremals of $cp_\phi$ are closed as well. From Remark \ref{R3} we know that 
in this case the extremals of $\widetilde{cp}_\phi$, and with it the flowlines of $\Phi_{\widetilde{B}}$, are closed as well. Therefore the isotropy groups are 
compact, i.e. the fibres are diffeomorphic to $S^1$. 

The fibration structure is of course invariant under the induced action of $\pi_1(B)$. Therefore it descends to a fibration of $TB^\times$ over the smooth manifold
of flowlines of $\Phi_B$.

2) $\Phi_M$ induces an circle fibration on $\Light_\phi$: Since $\pi_*$ conjugates $\Phi_M$ with $\Phi_B$, every flowline of $\Phi_M$ induces a finite covering 
of the respective flowline of $\Phi_B$. We have seen in Remark \ref{R3} that $cp_\phi$ is constant on every extremal of minimal period. Then Remark \ref{R2} 
implies that these coverings all have the same number of leaves. Together with part 1) this yields that $\Phi_M$ induces the structure of an circle fibration on 
$\Light_\phi$, i.e. $(M,h_\phi)$ is Zollfrei.
\end{proof}

\section{A weaker conjecture}

Despite Theorem \ref{T1} one can hope to prove a weaker version of the conjecture, e.g. assuming additional properties of the pseudo-Riemannian universal 
cover. Note that for $3$-manifolds, up to sign, every pseudo-Riemannian metric, that is not Riemannian or anti-Riemannian, is Lorentzian. This opens for us the
possibility to use notions of causality theory from Lorentzian geometry. \cite{guill1} raises the question, whether additionally assuming causality of the universal 
cover is sufficient for the Conjecture to be true. We can give the following partial answer:

\begin{theorem}\label{T1a}
If the $3$-manifold $M$ admits a Zollfrei metric $g$ such that the universal Lorentzian cover is globally hyperbolic, 
$M$ is covered by $S^2\times S^1$, i.e. diffeomorphic to either $S^2\times S^1$, $\R P^2\times S^1$, $\R P^3 \sharp \R P^3$ or the nonorientable $2$-sphere 
bundle over $S^1$. 
\end{theorem}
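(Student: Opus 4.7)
The plan is to combine the splitting forced by global hyperbolicity with the rigidity of the Zollfrei hypothesis to pin down the topology of the Cauchy hypersurface of the universal cover, and then to invoke Tollefson's classification. By the Bernal--S\'anchez smooth splitting theorem the globally hyperbolic cover $(\tilde M,\tilde g)$ is diffeomorphic to $\Sigma\times\mathbb R$ for a smooth spacelike Cauchy surface $\Sigma$. Since $\tilde M$ is simply connected, $\Sigma$ is a simply connected $2$-manifold, hence $\Sigma\cong S^2$ or $\Sigma\cong\mathbb R^2$. As the four manifolds listed in the statement are precisely the closed $3$-manifolds with universal cover $S^2\times\mathbb R$ (Tollefson), the theorem reduces to excluding the possibility $\Sigma\cong\mathbb R^2$.

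To transfer the Zollfrei property to the universal cover I would pass to the space $E$ of future-directed null geodesics of $\tilde M$. By the Cauchy property each such geodesic meets $\Sigma$ in exactly one point, so $E$ is naturally an $S^1$-bundle over $\Sigma$ whose fiber over $p$ is the projectivized future null cone at $p$. The deck group $\pi_1(M)$ lifts to an action on $E$. Because every lightlike geodesic of $M$ is closed, each $\tilde c\in E$ is set-invariant under a unique primitive deck transformation $\phi_{\tilde c}\in\pi_1(M)$ that translates $\tilde c\cong\mathbb R$ by the period of the corresponding closed geodesic downstairs; in particular $\phi_{\tilde c}$ has infinite order, and the assignment $\tilde c\mapsto\phi_{\tilde c}$ is continuous and equivariant under conjugation. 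The quotient $E/\pi_1(M)$ is the compact $3$-manifold of oriented closed null geodesics of $M$, and its compactness will drive the rigidity.

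The last and hardest step is to exclude $\Sigma\cong\mathbb R^2$. In this scenario $E\cong\mathbb R^2\times S^1$ is non-compact while covering the compact $3$-manifold $E/\pi_1(M)$, so the deck action must accumulate in $E$; the plan is to convert this accumulation into a contradiction via a Fermat-type reduction. Concretely, around any integral curve of a local timelike vector field on $\tilde M$ one can apply the construction of Theorem \ref{T2} and subsection \ref{S2.2} to obtain a locally defined Finsler metric on a spacelike disc in $\Sigma$ whose extremals encode the projections of nearby null geodesics. The Zollfrei hypothesis then forces every such extremal to close up after a uniformly bounded time (modulo $\pi_1(M)$); patching the local Fermat structures along $\Sigma$ should produce a complete metric geometry on $\Sigma$ with all ``geodesics'' closed, which by a Hopf--Rinow/Blaschke-type argument forces $\Sigma$ to be compact. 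The main obstacle is precisely this patching: making the locally defined Finsler metrics cohere well enough on $\Sigma$ to sustain a global completeness/compactness dichotomy. Once $\Sigma\cong S^2$ is established, $\tilde M\cong S^2\times\mathbb R$ and Tollefson's classification delivers the four listed diffeomorphism types.
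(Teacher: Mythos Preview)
Your overall architecture matches the paper's: use Bernal--S\'anchez to split $\widetilde M\cong\Sigma\times\R$, argue $\Sigma\cong S^2$, then invoke Tollefson. The divergence is in how $\Sigma\cong\R^2$ is excluded, and here your proposal has a genuine gap.

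The Fermat/Finsler route you sketch is not viable as stated. Theorem~\ref{T2} and the construction in~\S\ref{S2.2} require a \emph{standard stationary} structure, which you do not have on $\widetilde M$; there is no global timelike Killing field, so there is no reason for the local Finsler structures on discs in $\Sigma$ to patch coherently. You correctly identify this as ``the main obstacle'' and then do not resolve it. Worse, even if a global Finsler metric on $\Sigma$ existed, the extremals would not be closed: null geodesics in $\widetilde M$ are lines, so their projections to $\Sigma$ only ``close up modulo $\pi_1(M)$''. A Blaschke-type argument needs genuinely closed geodesics, and there is no obvious mechanism that converts $\pi_1(M)$-periodicity into compactness of $\Sigma$.

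The paper bypasses all of this with a single external result: Low's theorem that a globally hyperbolic spacetime with non-compact Cauchy surface is \emph{nonrefocussing}. The Zollfrei hypothesis gives uniformly short closed lightlike loops through every $p\in M$, hence (since the loops share a common nontrivial homotopy class $\eta$) every lightlike geodesic through $\eta^{-1}(\widetilde p)$ reaches $\widetilde p$, so $(\widetilde M,\widetilde g)$ is refocussing at $\widetilde p$. Low then forces $\Sigma$ to be compact, hence $S^2$, and Tollefson finishes. Your discussion of the null-geodesic space $E$ and the deck translation $\phi_{\tilde c}$ is already circling this idea; the missing ingredient is simply to recognize that what you need is the refocussing notion and Low's theorem rather than an ad hoc Finsler compactness argument.
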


Recall that according to \cite{besa3} we can define a Lorentzian manifold $(M,g)$ to be {\it globally hyperbolic} iff $(M,g)$ is isometric 
to $(N\times \R, g_0+g_0(\delta,.)-\beta dt^2)$, where $\beta$ is a smooth positive function on $N\times \R$, $g_0$ is a Riemannian metric on $N$ and $\delta$ 
is a smooth vector field on $N$ both i.g. depending on the $t$-coordinate. Note that global hyperbolicity implies causality for a Lorentzian manifold.

Theorem \ref{T1a} follows from a result due to Low (\cite{low2}, Theorem 5). The proof leans on the notion of {\it refocussing spacetimes} introduced in \cite{low1}.
\begin{definition}[\cite{chekrud08}, Definition 22]
A strongly causal spacetime $(M, g)$ (that is not necessarily globally hyperbolic) is called refocussing at $p\in M$ if there exists a neighborhood $O$ of $p$
with the following property: For every open $U$ with $p\in U\subseteq O$ there exists $q \notin U$ such that all the lightlike geodesics through $q$ enter $U$. 
A space-time $(M, g)$ is called refocussing if it is refocussing at some $p$, and it is called nonrefocussing if it is not refocussing at
every $p \in M$.
\end{definition}
Recall that global hyperbolicity implies strong causality for Lorentzian manifolds. So the definition is not needed in full generality, i.e. those who are not
familiar with causality theory might as well substitute global hyperbolicity for strong causality in the definition.

\begin{theorem}[\cite{low2}, Theorem 5]\label{TL1}
Let $M$ be globally hyperbolic, with non-compact Cauchy hypersurface $N$. Then $M$ cannot be refocussing.
\end{theorem}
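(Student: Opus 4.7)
The theorem is the contrapositive of the statement that refocussing forces compact Cauchy hypersurfaces, and that is how I would try to prove it. Fix the splitting $M=N\times\R$ of global hyperbolicity from \cite{besa3}, place the alleged refocussing point $p$ on $\Sigma_0=N\times\{0\}$, and use refocussing to produce a nested basis $U_k\downarrow\{p\}$ of neighborhoods $U_k\subseteq O$ together with points $q_k\notin U_k$ every null geodesic through which enters $U_k$; up to time reversal we may take $q_k\in I^+(p)$. The key geometric object is, for each $q\in M\setminus\Sigma_0$, the topological $(\dim N-1)$-sphere $S_q:=\partial J^-(q)\cap\Sigma_0$ bounding the compact disk $J^-(q)\cap\Sigma_0$ in $N$; its compactness is a direct consequence of global hyperbolicity.

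The plan is then to derive a contradiction by ruling out both possible asymptotic behaviours of $(q_k)$: either $(q_k)$ accumulates at some $q_\infty\in M$, or $(q_k)$ escapes every compactum of $M$. In the accumulation case, continuous dependence of null geodesics on initial data forces every past null geodesic from $q_\infty$ to meet $\Sigma_0$ exactly at $p$, so the entire past celestial sphere of $q_\infty$ focusses to the single point $p$. I would rule this out by a Penrose--Hawking style focal-point argument: total focussing of the whole celestial sphere along the segment $[p,q_\infty]$ produces timelike curves from $q_\infty$ into $I^-(p)$, contradicting the strong causality guaranteed by global hyperbolicity.

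In the escape case the non-compactness of $N$ enters essentially. Choose an exhaustion $N=\bigcup_jK_j$ by compacta containing $p$. Since $(q_k)$ leaves every compact subset of $M$, the disks $J^-(q_k)\cap\Sigma_0$ cannot all remain in a fixed $K_j$: otherwise the causal diamond $J^+(K_j)\cap J^-(q_k)$ would be compact and would trap $(q_k)$ in a fixed compact region of $M$. Hence for large $k$ the sphere $S_{q_k}$ reaches into $N\setminus K_j$, producing a past null geodesic from $q_k$ that lands on $\Sigma_0$ outside $K_j$ and therefore cannot enter $U_k\subseteq K_j$, contradicting refocussing.

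The main obstacle I anticipate is the escape case: one needs a rigorous argument that past light cones spread out on Cauchy slices once their apex escapes to infinity in $M$, and this is precisely where non-compactness of $N$ is needed. Making the compactness of the causal diamond precise and upgrading it to the quantitative claim that the relevant null geodesic stays outside $U_k$ is the most delicate step; the accumulation case is still non-trivial but fits within standard focal-point machinery once the total focussing of the celestial sphere has been pinned down.
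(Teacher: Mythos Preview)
The paper does not prove this theorem; it is quoted from Low~\cite{low2} and then used as a black box in the proof of the subsequent proposition and of Theorem~\ref{T1a}. There is therefore no argument in this paper against which to compare your attempt.

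On the merits of your outline: the accumulation/escape dichotomy is a natural scaffold, but several steps are not justified. The reduction ``up to time reversal we may take $q_k\in I^+(p)$'' is not warranted by the definition of refocussing, since a null geodesic through $q_k$ is only required to meet $U_k$ somewhere, not to pass through $p$, so $q_k$ need not lie in the causal future or past of $p$ at all. In the accumulation case you must also handle the possibility $q_\infty=p$, which your focal-point sketch does not cover. Most seriously, the escape-case compactness step does not close as written: from $J^-(q_k)\cap\Sigma_0\subseteq K_j$ you obtain only $q_k\in D^+(K_j)$, and the future Cauchy development of a compact subset of a Cauchy hypersurface need not be compact (for instance the warped product $-dt^2+e^{2t}g_N$ with $(N,g_N)$ complete and non-compact is globally hyperbolic with Cauchy slice $N\times\{0\}$, yet $D^+$ of any closed $g_N$-ball of radius at least $1$ contains an entire ray $\{x_0\}\times[0,\infty)$); and the diamond $J^+(K_j)\cap J^-(q_k)$ you invoke, while compact for each $k$, depends on $k$ and so does not trap the whole sequence in one fixed compactum. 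You rightly flag this as the delicate point, but the argument as stated has a genuine gap here.
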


With this at hand we can state the following proposition for our purposes.

\begin{prop}
Let $(M,g)$ be a Lorentzian manifold such that the universal Lorentzian cover $(\widetilde{M},\widetilde{g})$ is globally hyperbolic.
If there exists $p\in M$ such that all lightlike geodesics eminating from $p$ return to $p$ with uniformly bounded Riemannian arclength (w.r.t. 
a fixed complete Riemannian metric on $M$), then $M$ is compact and $\widetilde{M}$ is spatially compact, i.e. $\widetilde{M}\cong N\times \R$ with $N$ compact.
\end{prop}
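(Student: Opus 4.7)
The plan is to apply Low's Theorem \ref{TL1} in its contrapositive form: by exhibiting a point at which $(\widetilde{M},\widetilde{g})$ is refocussing, the Cauchy hypersurface of $\widetilde{M}$ will be forced to be compact, yielding the desired spatial compactness.

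I would begin by lifting $p$ to $\tilde{p}\in\widetilde{M}$ and pulling the fixed complete Riemannian metric on $M$ back to $\widetilde{M}$, where it remains complete. The closed Riemannian ball $\bar{B}_R(\tilde{p})$ of radius equal to the uniform arclength bound $R$ is therefore compact. Each lightlike geodesic from $\tilde{p}$ has lifted Riemannian arclength at most $R$ up to its return, so its endpoint lies in $\bar{B}_R(\tilde{p})\cap\pi_1(M)\cdot\tilde{p}$. Proper discontinuity of the deck action forces this intersection to be a finite set $\{\gamma_1\cdot\tilde{p},\dots,\gamma_N\cdot\tilde{p}\}$, and every lightlike geodesic from $\tilde{p}$ terminates in it.

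The refocussing step comes next. The hypothesis is invariant under deck transformations, so each $\gamma_i^{-1}\cdot\tilde{p}$ inherits the same return property, and reversing a null geodesic from $\tilde{p}$ to $\gamma_i\cdot\tilde{p}$ yields a null geodesic from $\gamma_i^{-1}\cdot\tilde{p}$ terminating at $\tilde{p}$. The candidate refocussing witness for $\tilde{p}$ is $\tilde{q}:=\gamma_i^{-1}\cdot\tilde{p}$, and I would verify that for a sufficiently small neighborhood $\tilde{O}$ of $\tilde{p}$ and any open $\tilde{U}$ with $\tilde{p}\in\tilde{U}\subseteq\tilde{O}$, every lightlike geodesic through $\tilde{q}$ enters $\tilde{U}$. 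Granted refocussing, Theorem \ref{TL1} forces the Cauchy hypersurface $N$ of $\widetilde{M}$ to be compact, so $\widetilde{M}\cong N\times\R$ is spatially compact. Compactness of $M$ would then follow from the observation that along a future null geodesic from $\tilde{p}$ to $\gamma_i\cdot\tilde{p}$ the deck element $\gamma_i$ shifts the Cauchy time function by a uniformly bounded positive amount; iterating, $\{\gamma_i^k\cdot\tilde{p}\}_{k\in\Z}$ sweeps the $\R$-factor with bounded step, so $\pi_1(M)$ acts cocompactly on $\widetilde{M}\cong N\times\R$ and $M$ is compact.

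The main obstacle is the refocussing verification. The definition demands that \emph{every} lightlike geodesic through $\tilde{q}$, in every direction, enters $\tilde{U}$, not merely those inherited as reverses of null geodesics from $\tilde{p}$. In the simplest case $N=1$, deck invariance closes the argument immediately, since every future null geodesic from $\gamma_1^{-1}\cdot\tilde{p}$ terminates at $\tilde{p}$ exactly. For $N>1$ the argument is genuinely more delicate: one must refine the choice of $\tilde{q}$ — perhaps on an intermediate point of a returning null geodesic, or after shrinking $\tilde{U}$ and re-selecting $\gamma_i$ — so that the entire space of null directions at $\tilde{q}$, whose first-return map takes values in a finite set of orbit points, focusses near $\tilde{p}$. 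This is the technical crux, where strong causality inherited from global hyperbolicity, together with the upper semicontinuity of the first-return time, must be exploited.
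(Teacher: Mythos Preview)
Your proposal correctly identifies the contrapositive of Theorem~\ref{TL1} as the tool, and your compactness argument at the end is essentially the paper's (the paper phrases it as: by proper discontinuity some power $\eta^k$ moves the compact Cauchy slice $N\times\{0\}$ off itself, so $\widetilde M/\langle\eta^k\rangle$ is compact and covers $M$). The genuine gap is precisely where you flag it: the refocussing step for $N>1$. You leave this unresolved, and the suggestions you sketch --- choosing $\tilde q$ on an intermediate point of a returning geodesic, shrinking $\tilde U$, invoking semicontinuity of the return time --- do not lead to a proof; a null geodesic through a generic intermediate point has no reason to refocus anywhere near $\tilde p$.

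What you are missing is that $N=1$ always holds. The space of (say future) lightlike directions at $p$ is a sphere, hence connected; the geodesic flow depends continuously on the initial direction; and the uniform Riemannian arclength bound keeps the returning segment inside a fixed compact set. Consequently the lightlike loops at $p$ form a \emph{continuous} family of based loops parametrized by a connected space, and therefore all represent a single class $\eta\in\pi_1(M,p)$. This class is nontrivial, since otherwise a lift of such a loop would be a closed causal curve in the globally hyperbolic cover. With this in hand your own ``$N=1$'' observation finishes the argument verbatim: every lightlike geodesic through $\eta^{-1}\cdot\tilde p$ passes through $\tilde p$, so $(\widetilde M,\widetilde g)$ is refocussing at $\tilde p$. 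This is exactly how the paper proceeds.
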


\begin{proof}
According to Theorem \ref{TL1} the only points we have to prove are (1) $(\widetilde{M},\widetilde{g})$ is refocussing and (2) $M$ is compact. 

(1). Since the lightlike geodesic loops around $p$ have bounded Riemannian arclength, they share a common fundamental class $\eta\in \pi_1(M)$. Notice that 
$\eta$ is nontrivial, since else the universal cover would violate causality. Hence the universal cover is refocussing at any point $\widetilde{p}\in 
\widetilde{\pi}^{-1}(p)$, since all lightlike geodesics through $\eta^{-1}(\widetilde{p})$ meet $\widetilde{p}$ and there exists a neighborhood of $\widetilde{p}$ that 
does not contain $\eta^{-1}(\widetilde{p})$.

(2). The deck transformation group of the universal cover acts properly discontinuously on the universal cover. This implies that there exists a $k\in \Z$ such that 
$\eta^k(N\times\{0\})$ is disjoint from $N\times \{0\}$, as $N$ is compact. This implies that the quotient of $\widetilde{M}\cong N\times \R$ by the group generated 
by $\eta^k$ is compact and moreover covers $M$. Then $M$ has to be compact. 
\end{proof}

\begin{proof}[Proof of Theorem \ref{T1a}]
Theorem \ref{TL1} implies for $3$-manifolds that any Cauchy hypersurface in the universal cover has to be diffeomorphic to $S^2$. Thus 
$\widetilde{M}\cong S^2\times \R$. The compact quotients of $S^2\times \R$ were classified in \cite{tollefson}. They are exactly $S^2\times S^1$, 
$\R P^3\times S^1$, $\R P^3 \sharp \R P^3$ and the nonorientable $2$-sphere bundle over $S^1$. 
\end{proof}

At the end of these notes we want to post some questions in connection with Zollfrei $3$-manifolds. P. Mounoud asked if every Zollfrei $3$-manifold is a 
Seifert fibration. At this point the author does not have an idea how to prove such a claim or how to give a counterexample. If a counterexample exists, then 
it cannot be stationary, since \cite{fjp} show that every compact stationary Lorentzian manifold is a Seifert fibration. 

On the other hand is the question if every Seifert fibration admits a Zollfrei Lorentzian manifold. This question is especially interesting for the 
trivial bundles over surfaces of genus greater than one. Again such examples cannot be stationary, since then the Finsler metric $F$ (see section \ref{S2.2})
will be globally well 
defined and the lightlike geodesics correspond the geodesics of $F$ on the underlying surface. Since the fundamental groups of the surfaces in question 
are nontrivial, not all lightlike geodesics can be homotopic. This clearly contradicts the Zollfrei property. 

{\bf Acknowledgment:} I would like to thank Kai Zehmisch and Pierre Mounoud for their careful reading of the first draft and their valuable comments.

\bibliographystyle{amsalpha}

\end{document}